\newcommand{\re}{\mathbb{R}}
\newcommand{\N}{\mathbb{N}}
\newcommand{\diag}{\mbox{diag}}
\newcommand{\half}{\frac{1}{2}}
\newcommand{\dt}{\delta}
\def\af{\alpha}
\def\bt{\beta}
\newcommand{\reff}[1]{(\ref{#1})}
\newcommand{\bea}{\begin{eqnarray}}
\newcommand{\eea}{\end{eqnarray}}
\newcommand{\be}{\begin{equation}}
\newcommand{\ee}{\end{equation}}
\newcommand{\baray}{\begin{array}}
\newcommand{\earay}{\end{array}}
\newcommand{\bsry}{\begin{subarray}}
\newcommand{\esry}{\end{subarray}}
\newcommand{\bca}{\begin{cases}}
\newcommand{\eca}{\end{cases}}
\newcommand{\bcen}{\begin{center}}
\newcommand{\ecen}{\end{center}}
\newcommand{\bnum}{\begin{enumerate}}
\newcommand{\enum}{\end{enumerate}}
\newcommand{\bit}{\begin{itemize}}
\newcommand{\eit}{\end{itemize}}
\newcommand{\bbm}{\begin{bmatrix}}
\newcommand{\ebm}{\end{bmatrix}}
\newcommand{\bmx}{\begin{matrix}}
\newcommand{\emx}{\end{matrix}}
\newcommand{\bpm}{\begin{pmatrix}}
\newcommand{\epm}{\end{pmatrix}}
\newtheorem{theorem}{Theorem}[section]
\newtheorem{definition}[theorem]{Definition}
\newtheorem{remark}[theorem]{Remark}
\newtheorem {fact}[theorem]{Fact}
\newtheorem {exm}[theorem]{Example}
\newtheorem {prop}[theorem]{Proposition}
\newtheorem {alg}[theorem]{Algorithm}
\newtheorem {ass}[theorem]{Assumption}
\numberwithin{equation}{section}
\begin{document}

\title{The Split Feasibility Problem with Polynomials}

\author{Jiawang Nie}
\address{Department of Mathematics,
University of California San Diego,
9500 Gilman Drive, La Jolla, CA, USA, 92093.}
\email{njw@math.ucsd.edu}

\author{Jinling Zhao}
\address{
School of Mathematics and Physics, University of Science and
Technology Beijing, Beijing 100083, China.}
\email{jlzhao@ustb.edu.cn}

\subjclass[2010]{90C25, 65K10, 90C33}

\date{}

\keywords{split feasibility problem, polynomial, semidefinite relaxation}

\begin{abstract}
This paper discusses the split feasibility problem with polynomials.
The sets are semi-algebraic, defined by polynomial inequalities.
They can be either convex or nonconvex, either feasible or infeasible.
We give semidefinite relaxations for
representing the intersection of the sets.
Properties of the semidefinite relaxations are studied.
Based on that, a semidefinite relaxation algorithm
is given for solving the split feasibility problem.
Under a general condition, we prove that:
if the split feasibility problem is feasible,
we can get a feasible point; if it is infeasible,
we can obtain a certificate for the infeasibility.
Some numerical examples are given.
\end{abstract}

\maketitle

%---------------------------  Section 1  -----------------------------------%
\section{Introduction}
\hspace*{0.15in}

The split feasibility problem (SFP) can be stated as follows:
for two given sets $C \subset \re^n$, $Q \subset \re^m$,
and a given matrix $A \in \re^{m \times n}$,
find a point $x^*$ such that
\be  \label{x:in:CQ}
x^*\in C, \quad Ax^*\in Q.
\ee
Here, ${\mathbb R}^n$ denotes the $n$-dimensional Euclidean space
over the real field.
The SFP was originally introduced by Censor and Elfving \cite{Cen1}
for modeling phase retrieval problems.
It can serve as a unified tool for modeling
many different inverse problems, such as image reconstruction,
signal processing and intensity-modulated radiation therapy problems.
We refer to \cite{Byr1,Byr2,Cen1,Cen3} and the references therein
for related work on split feasibility problems.
Later, the SFP was generalized to the multiple-sets split feasibility
problem (MSFP), which was introduced by Censor et al. \cite{Cen2}.
The MSFP can be stated similarly as follows:
for given sets
$C_1, \ldots, C_r \subset \re^n$,
$Q_1, \ldots, Q_t \subset \re^m$
and $A \in \re^{m \times n}$,
find a point $x^*$ such that
\be
x^*\in  \bigcap\limits_{i=1}^{r}C_i \ \ {\rm such\ that}\ \
Ax^*\in \bigcap\limits_{j=1}^{t}Q_j .
\ee
In particular, if $t=r=1$, the MSFP collapses to the SFP.

In the prior existing literature, $C, Q, C_i, Q_j$
are often assumed to be nonempty closed convex sets.
The projection type methods have been widely used
for solving the SFPs and MSFPs. We refer to the work
Byrne~\cite{Byr1,Byr2}, Censor et. al.~\cite{Cen1,Cen2,Cen3,Cen4}
and others \cite{Dang1,Eche,Qu1,Xu,Yang,Zhao}.

The CQ algorithm, proposed by Byrne~\cite{Byr1,Byr2},
is a classic method for solving the SFP.
Many other methods for solving SFPs and MSFPs
can be viewed as variations of it.
The CQ method has the basic iteration form:
\[
x^{k+1}=P_C \big(x^k-\gamma A^T (I-P_Q)Ax^k \big),
\]
where $\gamma\in (0,2/\rho(A^TA))$ is a parameter and
$\rho(A^TA)$ denotes the largest eigenvalue of $A^TA$.
The $P_C$ (resp., $P_Q$) stands for the projection
onto the set $C$ (resp., $Q$).
Usually, the performance of CQ type methods
depends on the initial point $x^0$, the choice of $\gamma$
and the geometry of the sets. Moreover,
they also require the sets $C$, $Q$, $C_i$, $Q_j$ to be convex.
Although a lot of progresses have been made,
there still exist computational challenges
for CQ type methods.

In this paper, we focus on the split feasibility problem with polynomials,
i.e., $C,Q$ are semi-algebraic sets given as
\be \label{df:setC}
C:=\{x\in {\mathbb{R}}^n|\ f_i(x)\ge 0,\ i=1,...,r\},
\ee
\be \label{df:setQ}
Q:=\{y\in {\mathbb{R}}^m|\ g_j(y)\ge 0, j=1,...,t\}.
\ee
In the above, the functions $f_i(x),\ g_j(y)$ are real multivariate
polynomials. The problem can also be formulated as a MSFP, with
\[
C_i :=\{x\in {\mathbb{R}}^n|\ f_i(x)\ge 0\},
\quad
Q_j :=\{y\in {\mathbb{R}}^m|\ g_j(y)\ge 0\}.
\]
In this paper, the sets $C$, $Q$, $C_i$ and $Q_j$
are not necessarily assumed to be convex.

%
%Since this problem is defined by polynomial inequalities,
%or namely, semi-algebraic sets, we call it
%\textit{semi-algebraic split feasibility problem (the
%SASFP, in short)}. This kind of problem is more than
%a special case of the split feasibility problem or the multiple-sets
%split feasibility problem (if we
%set $C_i :=\{x\in {\mathbb{R}}^n|\ f_i(x)\ge 0\}$
%and $Q_j :=\{y\in
%{\mathbb{R}}^m|\ g_j(y)\ge 0\}$), because the sets
%$C$, $Q$, $C_i$ and $Q_j$ may or may not be convex.
%

Since the sets are defined by polynomial inequalities,
we propose semidefinite programming (SDP) \cite{SDP} relaxation methods
for solving the SFP, using Lasserre type moment relaxations.
The properties of the SDP relaxations are studied.
Under a general condition, we show that:
if the SFP is feasbile
(i.e., it has at least one solution),
then we can compute a point $x^*$ satisfying \reff{x:in:CQ};
if the SFP is infeasible
(i.e., there is no point $x^*$ satisfying \reff{x:in:CQ}),
then we can obtain a certificate for the infeasibility.

The paper is organized as follows. In Section 2,
we give semidefinite relaxations for the intersection $C \cap A(Q)$.
In Section 3, we give an algorithm for solving the SFP
and prove its convergence properties.
In Section 4, we report some numerical experiments.

%---------------------------  Section 2  -----------------------------------%
\section{Semidefinite relaxations}

\hspace*{0.15in}
In this section, we give semidefinite relaxations
for the sets in the SFP. To do this, we need some tools
from polynomial optimization \cite{Las01,Las09,Las15,Lau09,Lau14}.

\subsection*{Notation}

The symbol $\mathbb N$ stands
for the set of nonnegative integers, and $\mathbb R$
for the set of real numbers. For $s\in {\mathbb R}$,
$\lceil s\rceil$ denotes the smallest integer not smaller than $s$.
For $x :=(x_1, \ldots, x_n) \in {\mathbb R}^n$
and $\af := (\af_1, \ldots, \af_n) \in \N^n$, denote
\[
x^\alpha := x_1^{\alpha_1}\cdots x_n^{\alpha_n}, \quad
|\alpha|:=\alpha_1+\cdots+\alpha_n.
\]
The $x_i$ (resp., $\af_i$)
denotes the $i$-th entry of $x$ (resp., $\af$).
For a degree $d>0$, denote
\[
{\mathbb{N}}_d^n \, := \,
\{\alpha\in {\mathbb{N}}^n|\ |\alpha|\le d\}.
\]
Denote by $[x]_d$ the column vector of
all monomials in $x$ and of degrees at most $d$
(they are ordered in the graded lexicographical ordering), i.e.,
\[
[x]_d  := [1, \, x_1, \ldots, x_n, \, x_1^2,\,  x_1x_2,\,
\ldots,\, x_{n-1}x_n^{d-1}, x_n^d \,]^T.
\]
The symbol ${\mathbb R}[x]$ denotes the ring of polynomials in
$x$ with real coefficients, and ${\mathbb R}[x]_d$
is the space of real polynomials in $x$
with degrees at most $d$. For a polynomial $p$,
$\deg(p)$ stands for its total degree.
For a symmetric matrix $X$, $X\succeq 0$ means
$X$ is positive semidefinite.
For a vector $x$, $\| x \|$ denotes its Euclidean norm.
In the space $\re^n$, $e$ denotes the vector of all ones,
while $e_i$ denotes the $i$th unit vector in the canonical basis.
The $0$ denotes the zero vector.
Denote by $I$ the identity matrix,
when the dimension is clear in the context.

\subsection{Localizing matrices}

The set $\re^{ \N^n_d }$ is the space of all real vectors
that are labeled by $\af \in \N_d^n$. That is, every
$y \in \re^{ \N^n_d }$ can be labeled as
\[
y \, = \, (y_\af)_{ \af \in \N_d^n }.
\]
In some literature, e.g., \cite{Nie3}, such $y$ is called
{\it truncated multi-sequences} (tms) of degree $d$.
For a polynomial $f \in \re[x]_{2k}$, the product
$f(x)[x]_s[x]_s^T$
is a symmetric matrix polynomial of length $\binom{n+s}{s}$,
where $s = \lceil k- \deg(f)/2 \rceil$.
Expand it as
\[
f(x)[x]_s[x]_s^T \, = \, \sum_{ \af \in \N_{2k}^n }
x_\af  F_\af,
\]
for symmetric matrices $F_\af$.
For $y \in \re^{ \N^n_d{2k} }$, define the matrix
\be \label{df:Lf[y]}
L_{f}^{(k)}[y] \, := \, \sum_{ \af \in \N_{2k}^n }
y_\af  F_\af.
\ee
It is called the $k$th {\it localizing matrix} generated by $f$ and $y$.
Clearly, for any fixed $f$, $L_{f}^{(k)}[y]$
is linear in $y$; for any fixed $y$, $L_{f}^{(k)}[y]$
is linear in $f$ (with fixed degree).

\begin{fact}
If $f(u) \geq 0$ and $y = [u]_{2k}$, then
\[ L_{f}^{(k)}[y] = f(u) [u]_s[u]_s^T \succeq 0. \]
\end{fact}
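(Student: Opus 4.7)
The plan is to unpack the definition of the localizing matrix and substitute the specific choice $y = [u]_{2k}$, which forces $y_\alpha = u^\alpha$ for every $\alpha \in \mathbb{N}^n_{2k}$. Once this substitution is carried out, the sum $\sum_\alpha y_\alpha F_\alpha$ should collapse to the evaluation at $x = u$ of the matrix polynomial $f(x)[x]_s[x]_s^T$, whose expansion is exactly what defines the matrices $F_\alpha$.

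First I would note that, with $s = \lceil k - \deg(f)/2 \rceil$, every entry of $f(x)[x]_s[x]_s^T$ is a polynomial of degree at most $2k$, so the expansion
\[
f(x)[x]_s[x]_s^T \, = \, \sum_{\alpha \in \mathbb{N}^n_{2k}} x^\alpha F_\alpha
\]
involves only indices in $\mathbb{N}^n_{2k}$ and the components $y_\alpha$ appearing in \reff{df:Lf[y]} are all well-defined under the hypothesis $y = [u]_{2k}$. Plugging $y_\alpha = u^\alpha$ into \reff{df:Lf[y]} and comparing with the displayed expansion yields
\[
L_f^{(k)}[y] \, = \, \sum_{\alpha \in \mathbb{N}^n_{2k}} u^\alpha F_\alpha \, = \, f(u)[u]_s[u]_s^T.
\]

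Finally, writing $v := [u]_s$, the matrix $[u]_s[u]_s^T = vv^T$ is positive semidefinite of rank at most one, and multiplication by the nonnegative scalar $f(u)$ preserves positive semidefiniteness. There is no genuine obstacle in this argument; it is a direct definition check. The only point that requires a moment of care is verifying that the index set in the expansion of $f(x)[x]_s[x]_s^T$ really is contained in $\mathbb{N}^n_{2k}$, which is ensured by the choice of $s$.
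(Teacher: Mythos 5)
Your argument is correct and is precisely the definitional check the paper has in mind (the Fact is stated without proof because it follows immediately from \reff{df:Lf[y]}): substituting $y_\alpha = u^\alpha$ collapses $\sum_\alpha y_\alpha F_\alpha$ to the evaluation $f(u)[u]_s[u]_s^T$, which is a nonnegative multiple of the rank-one matrix $[u]_s[u]_s^T$ and hence positive semidefinite. Nothing further is needed.
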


\begin{exm}
For the case $n=2$, $k=2$
and $f= 1 - x_1^2-x_2^2$, we have
\[
L_f^{(2)}[y]=\left [\begin{matrix}
y_{00}-y_{20}-y_{02} &  y_{10}-y_{30}-y_{12} &  y_{01}-y_{21}-y_{03} \\
y_{10}-y_{30}-y_{12} &  y_{20}-y_{40}-y_{22} &  y_{11}-y_{31}-y_{13} \\
y_{01}-y_{21}-y_{03} &  y_{11}-y_{31}-y_{13} &  y_{02}-y_{22}-y_{04} \\
\end{matrix}\right ].
\]
\end{exm}

\subsection{Semidefinite relaxations}

The split feasibility problem is to find a point $x^*$ such that
\be
x^*\in C:=\bigcap_{i=1}^t C_i, \quad
Ax^*\in Q := \bigcap_{j=1}^t Q_j,
\ee
where $A \in \re^{m \times n}$ is given and
\[
C_i =\{x\in {\mathbb{R}}^n|\ f_i(x)\ge 0 \}, \quad
Q_j =\{y\in {\mathbb{R}}^m|\ g_j(y)\ge 0 \},
\]
for polynomials $f_i(x),\ g_j(y)$. For convenience, denote
\be \label{df:hj(x)}
h_j(x) \, := \, g_j(Ax).
\ee
Each $h_j$ is a polynomial in $x$. Let
\be \label{df:H}
H:=\{x\in {\mathbb{R}}^n|\ h_j(x)\ge 0, j=1,...t\}.
\ee
The SFP is equivalent to finding a point
\[
x^* \, \in \, C \cap H.
\]
Denote the degrees
\be \label{def:d}
\baray{rcll}
d_{f,i} &:=& \lceil \deg(f_i)/2\rceil,  &
d_{h,j} := \lceil \deg(h_j)/2\rceil, \\
d &:=& \max\limits_{i,j}\{d_{f,i},d_{h,j} \}. &
\earay
\ee

\begin{fact}
For all $k\ge d$ and for all $u\in C \cap H$,
we have
\[ f_i(u), h_j(u)\ge 0 \]
for all $i,j$. Hence,
\[
\baray{c}
f_i(x) [x]_{k-d_{f,i} } \big( [x]_{k-d_{f,i} } \big)^T  \succeq 0, \quad
h_j(x) [x]_{k-d_{h,j} } \big( [x]_{k-d_{h,j} } \big)^T  \succeq  0.
\earay
\]
This implies that if $y = [u]_{2k}$ and $u \in C \cap H$, then
\[
L_{f_i}^{(k)}[y] \succeq 0, \quad
L_{g_j}^{(k)}[y] \succeq 0.
\]
Let $f_0 = 1$, then
$
f_0(x)[x]_{k}[x]_{k}^T\succeq 0
$
for all $x \in \re^n$. So,
\[
L_{1}^{(k)}[y] \succeq 0
\]
for all $y = [u]_{2k}$. It is called the moment matrix of $y$.
\end{fact}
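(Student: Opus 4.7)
The Fact is a chain of elementary implications, so the plan is to verify each link in order and then invoke the preceding Fact to produce the positive semidefiniteness conclusions.

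First I would unpack the definitions. For $u \in C \cap H$, the defining inequalities in \reff{df:setC} and \reff{df:H} give $f_i(u) \ge 0$ for every $i$ and $h_j(u) = g_j(Au) \ge 0$ for every $j$, purely by construction of the $C_i$ and of $H$.

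Next, for the rank-one matrix inequalities: for any fixed vector $v = [u]_s$, the symmetric matrix $vv^T$ has rank at most one with single nonzero eigenvalue $\|v\|^2 \ge 0$, so $vv^T \succeq 0$. Multiplying by a nonnegative scalar preserves positive semidefiniteness, hence $f_i(u)[u]_{k-d_{f,i}}[u]_{k-d_{f,i}}^T \succeq 0$, and likewise for $h_j(u)$. The one thing to verify here is that the exponents $k-d_{f,i}$ and $k-d_{h,j}$ are nonnegative, which is precisely the role of the hypothesis $k \ge d$ via the definition \reff{def:d}.

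Finally, the localizing matrix statements follow directly from the previous Fact applied with $f = f_i$ (respectively $f = h_j$), taking $y = [u]_{2k}$ and $s = \lceil k - \deg(f)/2 \rceil$. By that Fact, $L_{f_i}^{(k)}[y]$ equals the PSD rank-one matrix produced in the previous step, and similarly for $h_j$. The moment-matrix claim is the special case $f_0 \equiv 1$, which is nonnegative on all of $\re^n$; hence $L_1^{(k)}[y] \succeq 0$ for any $y = [u]_{2k}$ with no restriction on $u$.

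There is no real obstacle in this argument — the only thing one must watch is the degree bookkeeping, namely that $k \ge d$ makes every exponent appearing in \reff{df:Lf[y]} nonnegative and makes the matrix sizes in the expansion consistent.
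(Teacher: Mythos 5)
Your proposal is correct and follows exactly the chain of implications the paper intends: the sign conditions come straight from the definitions of $C$ and $H$, the scaled rank-one matrices are PSD, and the localizing-matrix and moment-matrix conclusions are the earlier Fact applied to $f_i$, $h_j$, and $f_0=1$ respectively (the paper's $L_{g_j}^{(k)}[y]$ is a typo for $L_{h_j}^{(k)}[y]$, as you implicitly read it). The paper offers no separate proof of this Fact, and your verification — including the observation that $k\ge d$ keeps the truncation orders $k-d_{f,i}$ and $k-d_{h,j}$ nonnegative — is the intended argument.
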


Note that
\[
C \cap H =
\left\{ x \in \re^n
\left|\baray{c}
f_1(x) \geq 0, \ldots, f_r(x) \geq 0 \\
h_1(x) \geq 0, \ldots, h_t(x) \geq 0
\earay \right.
\right\}.
\]
So, $C \cap H$ is always contained in the set (note $f_0=1$)
\be \label{SDr:Sk}
S_k := \left\{ x \in \re^n
\left| \baray{c}
\exists y \in \re^{\N_{2k}^n}, \, y_0 = 1, \\
x = (y_{e_1}, \ldots, y_{e_n} ),  \\
L_{f_i}^{(k)}[y] \succeq 0 \,( i=0, \ldots, r), \\
L_{h_j}^{(k)}[y] \succeq 0 \,( j=1, \ldots, t) \\
\earay \right.
\right\},
\ee
for all $k \geq d$. Each $S_k$ is the projection of
a set in $\re^{\N_{2k}^n}$
that is defined by linear matrix inequalities.
It is a {\it semidefinite relaxation} of $C \cap H$,
because $C \cap H \subseteq S_k$ for all $k \geq d$.
It holds the nesting containment relation \cite{Nie1}
\be \label{nest:cont}
S_d \supseteq S_{d+1} \supseteq \cdots \supseteq C\cap H.
\ee

\subsection{Some basic properties}

The first one is about the feasibilities and infeasibilities between
$C \cap H$ and the semidefinite relaxations $S_k$ in \reff{SDr:Sk}.

\begin{prop}  \label{prop:feas}
Let $C,H$ be the sets in the above.
If the intersection $C \cap H \ne \emptyset$,
then the semidefinite relaxation
$S_k \ne \emptyset$ for all $k \geq d$.
Therefore, if $S_k = \emptyset$ for some $k \geq d$,
then $C \cap H = \emptyset$,
i.e., the split feasibility problem is infeasible.
\end{prop}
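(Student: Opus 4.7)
The plan is to prove the contrapositive implication via an explicit construction: starting from any feasible point of $C \cap H$, produce an explicit feasible point $y$ of the lifted semidefinite system, and then project it down to show $C \cap H \subseteq S_k$ for every $k \geq d$. Once this containment is established, both conclusions of the proposition follow trivially (the second is the contrapositive of the first).

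For the main construction, I would take an arbitrary $u \in C \cap H$ and set $y := [u]_{2k}$, the vector of all monomials of $u$ up to degree $2k$. I then verify the four defining conditions of $S_k$ in \reff{SDr:Sk}. First, $y_0 = u^{0} = 1$. Second, the coordinates $(y_{e_1}, \ldots, y_{e_n})$ are exactly $(u_1,\ldots,u_n) = u$, so we may take $x = u$ in the definition of $S_k$. Third, since $f_i(u) \geq 0$ and $h_j(u) \geq 0$ for every $i,j$ (as $u \in C \cap H$), the Fact preceding the proposition, together with the identity
\[
L_{f_i}^{(k)}[[u]_{2k}] \;=\; f_i(u)\,[u]_{k-d_{f,i}}\,[u]_{k-d_{f,i}}^T,
\]
and the analogous identity for $h_j$, shows that each localizing matrix is a nonnegative scalar times a rank-one positive semidefinite matrix, hence $\succeq 0$. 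The moment matrix case $f_0 = 1$ is handled identically. This proves $u \in S_k$, and since $u$ was arbitrary, $C \cap H \subseteq S_k$ for all $k \geq d$.

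From the containment $C \cap H \subseteq S_k$, both assertions are immediate: if $C \cap H$ contains some point $u$, then $u \in S_k$ so $S_k \neq \emptyset$; and contrapositively, if $S_k = \emptyset$ for some $k \geq d$, then necessarily $C \cap H = \emptyset$, i.e., the SFP is infeasible.

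There is no substantive obstacle here; the proposition is essentially a bookkeeping statement asserting that the Lasserre-type construction in \reff{SDr:Sk} is a genuine relaxation. The only thing that needs care is the verification that $y = [u]_{2k}$ is indeed indexed by $\N_{2k}^n$ (which is exactly the index set required for each $L_{f_i}^{(k)}[y]$ and $L_{h_j}^{(k)}[y]$ to be well defined, by the choice $d = \max_{i,j}\{d_{f,i}, d_{h,j}\}$ in \reff{def:d} and the assumption $k \geq d$), so that none of the localizing matrices require moments of degree exceeding $2k$.
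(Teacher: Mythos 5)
Your proposal is correct and follows essentially the same route as the paper's proof: taking $u \in C\cap H$, lifting it to $y=[u]_{2k}$, checking $y_0=1$, $u=(y_{e_1},\ldots,y_{e_n})$, and positive semidefiniteness of the localizing and moment matrices via the rank-one identity, then deducing the second claim as the contrapositive. No gaps.
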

\begin{proof}
For all $u \in C \cap H $ and $k \geq d$,
the tms $y:= [u]_{2k}$ satisfies the linear matrix inequalities
(for all $i,j$)
\[
L_{f_i}^{(k)}[y] \succeq 0 \, \quad
L_{h_j}^{(k)}[y] \succeq 0.
\]
Note that $y_0=1$ and $u = (y_{e_1}, \ldots, y_{e_n} )$,
so $u \in S_k$. Therefore, $C \cap H \ne \emptyset$ implies
$S_k \ne \emptyset$ for all $k \geq d$.
Consequently, if there exists $k \ge d$ such that $S_k = \emptyset$,
then we must have $C \cap H = \emptyset$.
\end{proof}

\begin{remark}
In Proposition~\ref{prop:feas},
we do not require the set $C$ or $H$ to be convex.
Moreover, neither $f_i$ nor $h_j$
is assumed to be concave.
\end{remark}

Next, we give conditions for the equality $S_k = C \cap H$.
A typical one is the sos-convexity/concavity.

\begin{definition}(\cite{Nie1})
A polynomial $f(x) \in \re[x]$ is called \textit{sos-convex} if
there exists a matrix polynomial $P(x) \in \re[x]^{\ell \times n}$
such that ($\ell$ may be different from $n$)
\[
\nabla^2 f(x) :=
\Big(\frac{\partial^2 f}{\partial  x_i \partial x_j}
\Big)_{i,j=1}^n
= P(x)^TP(x).
\]
Similarly, $f(x)$ is called
\textit{sos-concave} if $-f(x)$ is sos-convex.
\end{definition}

A polynomial $p\in \re[x]$ is said to be sos
if $p=p_1^2 +\cdots +p_k^2$,
for some real polynomials $p_1,\ldots,p_k \in \re[x]$.
We refer to \cite{BPR,Rez} for sos polynomials.

\begin{theorem}  \label{CH=Sk:sos}
Let $C, H, S_k$ be the sets in the above.
Assume that the polynomials $f_i(x)$ and $h_j(x)$ are
all sos-concave. Then, $S_k = C \cap H$
for all $k \geq d$.
\end{theorem}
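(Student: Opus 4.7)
The plan is to upgrade the one-sided inclusion $C \cap H \subseteq S_k$ recorded in \reff{nest:cont} to an equality, which amounts to proving $S_k \subseteq C \cap H$. I would fix an arbitrary $x \in S_k$ together with a witnessing tms $y \in \re^{\N_{2k}^n}$; by \reff{SDr:Sk}, $y_0 = 1$, $x = (y_{e_1}, \ldots, y_{e_n})$, $L_1^{(k)}[y] \succeq 0$, and all the localizing matrices $L_{f_i}^{(k)}[y]$, $L_{h_j}^{(k)}[y]$ are positive semidefinite. The task reduces to deducing $f_i(x) \geq 0$ and $h_j(x) \geq 0$ for every $i,j$.

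The essential tool is the Lasserre--Helton--Nie \emph{sos-convex gradient inequality}: whenever $p \in \re[u]$ is sos-convex, then for each fixed $x \in \re^n$ there is an sos polynomial $\sigma_{p,x}(u) \in \re[u]$ with $\deg(\sigma_{p,x}) \leq \deg(p)$ such that
\[
p(u) - p(x) - \nabla p(x)^T (u-x) \, = \, \sigma_{p,x}(u)
\]
(see \cite{Nie1,Las09}). Since each $f_i$ is sos-concave, $-f_i$ is sos-convex, so the above applied to $-f_i$ yields an sos polynomial $\sigma_i(u)$ of degree at most $\deg(f_i) \leq 2 d_{f,i} \leq 2k$ satisfying
\[
f_i(x) + \nabla f_i(x)^T(u - x) - f_i(u) \, = \, \sigma_i(u).
\]
Apply the Riesz functional $L_y$ (in the variable $u$) to both sides. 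Linearity together with $L_y[1] = y_0 = 1$ and $L_y[u_\ell - x_\ell] = y_{e_\ell} - x_\ell = 0$ collapses the left side to $f_i(x) - L_y[f_i]$, while the right side satisfies $L_y[\sigma_i] \geq 0$ because $L_1^{(k)}[y] \succeq 0$ forces $L_y$ to be nonnegative on every sos polynomial of degree $\leq 2k$. Hence $f_i(x) \geq L_y[f_i]$. Combined with $L_y[f_i] \geq 0$, which is read off by testing the PSD matrix $L_{f_i}^{(k)}[y]$ against the first standard basis vector, this gives $f_i(x) \geq 0$. The identical argument applied to each sos-concave $h_j$ produces $h_j(x) \geq 0$. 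Thus $x \in C \cap H$.

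The step I expect to be most delicate is the invocation of the sos representation of the gradient-inequality polynomial for sos-convex functions; this is a nontrivial theorem, but it is precisely the tool already referenced in the paper. Once it is in hand, everything else is degree bookkeeping: the hypothesis $k \geq d$ ensures $\deg(\sigma_i) \leq 2k$ (and likewise for the $h_j$ certificates), which is exactly what is needed for $L_y$ to be nonnegative on these polynomials via the PSD moment matrix. No convexity of $C$, $H$, or of the individual sets $C_i$, $Q_j$ is used beyond what sos-concavity of the defining polynomials already provides.
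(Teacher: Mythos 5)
Your proposal is correct and follows essentially the same route as the paper: both fix $x\in S_k$ with witnessing tms $y$, invoke the Helton--Nie result that $-f_i(u)+f_i(x)+\nabla f_i(x)^T(u-x)$ is sos for sos-concave $f_i$ (Lemma~8 of \cite{Nie1}), apply the Riesz functional using $L_1^{(k)}[y]\succeq 0$ to get $f_i(x)\geq \mathscr{R}_y(f_i)$, and read off $\mathscr{R}_y(f_i)\geq 0$ from the $(1,1)$-entry of $L_{f_i}^{(k)}[y]\succeq 0$. The degree bookkeeping you flag as delicate is handled correctly and matches the paper's use of $k\geq d$.
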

\begin{proof}
In \reff{nest:cont}, we have already seen that
$C \cap H \subseteq S_k$ for all $k \geq d$.
We need to prove the reverse containment
$
C \cap H \supseteq S_k.
$
Choose an arbitrary point $u \in S_k$,
with $u =(y_{e_1}, \ldots, y_{e_n})$ and
$y$ satisfying the conditions in \reff{SDr:Sk}.
We show $u \in C \cap H$ as follows.
Consider the new polynomial:
\[
\tilde{f}_i(x) \, := \, -f_i(x)+f_i(u)+\nabla f_i(u)^T(x-u).
\]
It is a polynomial in $x$, for fixed $u$. Note that
\[
\tilde{f}_i(u) = 0, \quad  \nabla \tilde{f}_i(u) = 0.
\]
By Lemma~8 of \cite{Nie1}, we know that
$\tilde{f}_i$ is an sos polynomial, say,
\[
\tilde{f}_i = p_1(x)^2  + \cdots + p_N(x)^2,
\]
for some real polynomials $p_1,\ldots, p_N \in \re[x]_k$.
Define the linear functional
\be \label{lf:Ry}
\mathscr{R}_y: \, \re[x]_{2k} \to \re, \quad
\mathscr{R}_y ( x^\af ) = y_\af,
\ee
for all $\af \in \N_{2k}^n$.
Since $L_{1}^{(k)}[y] \succeq 0$ (note $f_0 = 1$), we can see that
\[
\mathscr{R}_y (\tilde{f}_i )
= \sum_{\ell =1}^N  vec(p_\ell)^T \Big( L_{1}^{(k)}[y] \Big)  vec(p_\ell)
\geq 0.
\]
Here, $vec(p_\ell)$ denote the coefficient vector of $p_\ell$. So,
\[
\mathscr{R}_y (\tilde{f}_i ) = - \mathscr{R}_y(f_i) + f_i(u) \geq 0,
\]
because $\mathscr{R}_y \big(\nabla f_i(u)^T(x-u) \big)=0$.
Also note that $\mathscr{R}_y(f_i)$ is the $(1,1)$-entry of the matrix
$L_{f_i}^{(k)}[y]$. Thus, $L_{f_i}^{(k)}[y]\succeq 0$
and the above imply that
\[
f_i(u) \geq \mathscr{R}_y(f_i) \geq 0.
\]
This is true for all $i=1,\ldots, r$.
In the same way, we can prove that $h_j(u) \geq 0$ for all $j$.
Therefore, $u \in C \cap H$ for all $u \in S_k$,
i.e., $S_k \subseteq C \cap H$.
\end{proof}

\begin{remark}
In Theorem~\ref{CH=Sk:sos},
we do not assume that the sets $C,H$
have nonempty interiors. In other words, even if
either $C$ or $H$ has empty interior,
the conclusion $C \cap H = S_k$ is still true,
under the sos-concavity assumption on $f_i,h_j$.
This is stronger than the results in \cite{Nie1}.
\end{remark}

When the polynomials $f_i, h_j$ are not sos-concave,
there still exist general conditions ensuring $C\cap H = S_k$.
We refer to \cite{HN2,Nie1,Las09MP,Las09siopt} for related work.

%---------------------------  Section 3  -----------------------------------%
\section{An algorithm for solving the SFP}
\setcounter{equation}{0}

Let $C,Q$ be the sets as in
\reff{df:setC}-\reff{df:setQ}, defined
by polynomials $f_i, g_j$, with a given matrix $A \in \re^{m\times n}$
and $h_j(x) = g_j(Ax)$.
The set $H$ is as in \reff{df:H}. In this section,
the sets $C,Q, H$ are not assumed to be convex,
and none of $f_i, h_j$ is assumed to be concave.
The split feasibility problem is to find a point
$x^* \in C$ such that $Ax^* \in Q$, which
is equivalent to $x^* \in C \cap H$.

In this section, we propose an algorithm for solving the SFP,
based on the semidefinite relaxations in \reff{SDr:Sk}.
If it is feasible, we want to get a point $x^* \in C \cap H$;
if it is infeasible, we want to obtain a certificate for the infeasibility.

Let $d$ be the degree as in \reff{def:d}.
Choose a generic vector $\xi \in \re^{ \N_{2d}^n }$
such that $\| \xi\| \le \half$.
Consider the polynomial optimization problem
\be \label{POP:xi}
\left\{\baray{rl}
\min &  c(x):= \| [x]_d \|^2 + \xi^T [x]_{2d}, \\
\mbox{s.t.} & f_1(x) \geq 0, \ldots, f_r(x) \geq 0, \\
& h_1(x) \geq 0, \ldots, h_t(x) \geq 0.
\earay \right.
\ee
Because $\| \xi\| \le \half$, the objective $c(x)$
is a coercive function, i.e., the sublevel set
$\{ x \in \re^n: \, c(x) \leq M\}$
is compact for all $M$, so \reff{POP:xi}
must have a global minimizer whenever the SFP is feasible.
For solving \reff{POP:xi},
the Lasserre type moment relaxation of order $k$ ($\geq d$) is
\be  \label{Las:POP}
\left\{\baray{rl}
\min &  \sum_{|\bt| \leq d} \, y_{2\bt} +
          \sum_{|\af| \leq 2d} \, \xi_{\af} y_{\af} \\
\mbox{s.t.} & L_{f_i}^{(k)}[y] \succeq 0 \, ( 1 \le i \le r), \\
& L_{h_j}^{(k)}[y] \succeq 0 \, ( 1 \le j \le t), \\
& L_{1}^{(k)}[y] \succeq 0, \,  y_0 = 1, \, y \in \re^{\N_{2k}^n}. \\
\earay \right.
\ee
The set of $y$ satisfying \reff{Las:POP}
is the same as the set of $y$ satisfying \reff{SDr:Sk}.
So, \reff{Las:POP} is a semidefinite relaxation
for solving the SFP.

\begin{alg}  \label{alg:SFP}
Let $f_i, h_j$ be as above. Set $k := d$.
\bit

\item [Step~1] Solve the semidefinite program \reff{Las:POP}.
If it is infeasible, then the SFP is infeasible and stop;
otherwise, solve it for an optimizer $y^*$ if it exists
and then go to Step~2.

\item [Step~2] Let $u^k := (y^*_{e_1}, \ldots, y^*_{e_n})$.
If $f_i(u^k) \geq 0$ and $h_j(u^k) \geq 0$ for all $i,j$,
then $u^k$ is a solution of the SFP;
otherwise, let $k:=k+1$ and go to Step~1.

\eit

\end{alg}

The conclusion about infeasibility in Step~1 is justified
by Proposition~\ref{prop:feas}, while
the conclusion of Step~2 is very straightforward.
When the SFP is feasible, the semidefinite program
\reff{Las:POP} does not necessarily have an optimizer.
To guarantee the solvability of \reff{Las:POP},
we need the following assumption.

\begin{ass} \label{ass:AC}
There exist sos polynomials $a_0, a_1, \ldots, a_r$,
$b_1, \ldots, b_t$ and a real number $R>0$ such that
\be \label{ac:aifi:bjhj}
R- \| [x]_d\|^2 = a_0 +a_1 f_1+\cdots +a_rf_r + b_1h_1 +\cdots +b_th_t
\ee
and $\deg(a_i f_i), \deg(b_jh_j) \leq 2d$ for all $i,j$.
\end{ass}

Assumption~\ref{ass:AC} implies that
$\| [x]_d \|^2 \leq R$ for all $x \in C \cap H$,
so the intersection $C \cap H$ is bounded.
The reverse is not necessarily true.
However, when $C \cap H$ is bounded, say, $\| [x]_d \|^2 \leq R$,
we can add the polynomial $R - \| [x]_d \|^2$ to
the set $\{f_1, \ldots, f_r\}$,
while the intersection $C \cap H$ is not changed.
If the degree bounds on $a_i,b_j$ are removed,
Assumption~\ref{ass:AC}
becomes the classical {\it archimedean condition}
\cite{Las01,Putinar}.
The convergence of Algorithm~\ref{alg:SFP}
is summarized in the following theorem.

\begin{theorem}  \label{thm:sfpcvg}
Let $C,Q,f_i,h_j$ be as above.
If Assumption~\ref{ass:AC} holds,
then we have the properties:
\bit

\item [(i)]
If the SFP is infeasible, then
the semidefinite program \reff{Las:POP}
must be infeasible for all $k$ big enough.

\item [(ii)] If the SFP is feasible and
$\xi$ is generically chosen such that $\| \xi \| \le \half$,
then \reff{Las:POP} has an optimizer for all $k\geq d$
and the sequence $\{u^k\}_{k=d}^{\infty}$
converges to a solution $x^*$ of the SFP.

\eit
\end{theorem}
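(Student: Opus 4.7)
The plan is to handle the two parts separately. Part (i) rests on a standard Putinar-type infeasibility certificate, while part (ii) combines a coercivity-based boundedness argument for the SDP feasible set, a moment-hierarchy convergence argument, and a genericity argument for uniqueness of the POP minimizer. For part (i), assume $C \cap H = \emptyset$. Assumption~\ref{ass:AC} forces the quadratic module generated by $\{f_i,h_j\}$ to be archimedean: the identity \reff{ac:aifi:bjhj} places $R - \|[x]_d\|^2$ in the module, and since $\|[x]_d\|^2 \ge x_\ell^{2d}$ one deduces that $N - x_\ell^2$ lies in the module for some $N>0$ and each $\ell$. By Putinar's Positivstellensatz, emptiness of $C \cap H$ produces sos polynomials $\sigma_0,\sigma_1,\ldots,\sigma_r,\tau_1,\ldots,\tau_t$ with
\[
-1 \, = \, \sigma_0 + \sum_{i=1}^r \sigma_i f_i + \sum_{j=1}^t \tau_j h_j.
\]
Pick $k$ large enough that every summand has degree at most $2k$. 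For any $y$ feasible for \reff{Las:POP}, writing $\sigma_i = \sum_\ell p_\ell^2$ and using the PSD constraint $L_{f_i}^{(k)}[y] \succeq 0$ gives $\mathscr{R}_y(\sigma_i f_i) = \sum_\ell vec(p_\ell)^T L_{f_i}^{(k)}[y] vec(p_\ell) \ge 0$, exactly as in the proof of Theorem~\ref{CH=Sk:sos}; similarly $\mathscr{R}_y(\tau_j h_j) \ge 0$ and $\mathscr{R}_y(\sigma_0) \ge 0$. Applying $\mathscr{R}_y$ to the identity then yields $-1 = \mathscr{R}_y(-1) \ge 0$, a contradiction, so \reff{Las:POP} has no feasible $y$ for all such $k$.

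For part (ii), I would first establish that \reff{Las:POP} is solvable for every $k \ge d$. Feasibility comes from $y = [u]_{2k}$ for any $u \in C \cap H$, and closedness of the feasible set is immediate from the LMI and linear constraints. Boundedness follows from Assumption~\ref{ass:AC}: applying $\mathscr{R}_y$ to \reff{ac:aifi:bjhj} yields
\[
R - \sum_{|\beta|\le d} y_{2\beta} \, = \, \mathscr{R}_y(a_0) + \sum_i \mathscr{R}_y(a_i f_i) + \sum_j \mathscr{R}_y(b_j h_j) \, \ge \, 0,
\]
so the degree-$\le 2d$ diagonal of the moment matrix is bounded by $R$, and by PSDness of $L_1^{(k)}[y]$ so is every entry of $L_1^{(d)}[y]$. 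For $k > d$, I would multiply \reff{ac:aifi:bjhj} by even-degree monomials $x^{2\gamma}$ (which remain sos) and reapply $\mathscr{R}_y$ to obtain $y_{2\gamma + 2\beta} \le R y_{2\gamma}$; iterating this bootstrap lifts the bounds to all degrees $\le 2k$. A closed, bounded, nonempty feasible set with linear objective then attains its minimum, giving an optimizer $y^{*,k}$.

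Convergence of $\{u^k\}$ follows the standard Lasserre-hierarchy route. Embed each $y^{*,k}$ into $\re^{\N^n}$ by zero-padding. The uniform moment bounds enable a diagonal extraction of a subsequence $y^{*,k_\ell} \to y^*$ converging entry-wise, with $y^*$ satisfying $L_{f_i}^{(k)}[y^*] \succeq 0$, $L_{h_j}^{(k)}[y^*] \succeq 0$ and $L_1^{(k)}[y^*] \succeq 0$ for every $k \ge d$. By Putinar's theorem, the archimedean property makes $y^*$ the moment sequence of a probability measure $\mu$ supported on $C \cap H$, and a standard hierarchy-duality argument identifies the limit SDP value with $\min\{c(x) : x \in C \cap H\}$, attained by $\int c\, d\mu$. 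For generic $\xi$ with $\|\xi\|\le\half$, the coercive polynomial $c$ has a unique minimizer $x^* \in C \cap H$, so $\mu = \delta_{x^*}$ and in particular the first-moment vector $u^{k_\ell} \to x^*$. Because every subsequence produces the same limit, the full sequence $\{u^k\}$ converges to $x^*$, which is a solution of the SFP.

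The main obstacle I expect is the genericity-based uniqueness of the POP minimizer. One must show that outside a negligible subset of $\{\xi : \|\xi\|\le\half\}$ the perturbed objective $\|[x]_d\|^2 + \xi^T[x]_{2d}$ attains its minimum on $C \cap H$ at exactly one point; this is a Sard- or transversality-type argument in the space of linear perturbations of the objective, leveraging the semi-algebraic structure of $C \cap H$ and the coercivity that keeps all candidate minimizers in a fixed compact set. A secondary technical subtlety is closing the gap between the degree-$\le 2d$ bounds supplied directly by Assumption~\ref{ass:AC} and the bounds needed at every relaxation order $k$; the monomial-multiplication bootstrap sketched above is the natural route, but one has to check degrees carefully so that each multiplied identity still falls within the reach of the PSD certificates at level $k$.
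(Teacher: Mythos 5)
Your proposal is correct and follows essentially the same route as the paper: Putinar's Positivstellensatz applied to $-1$ on the (empty) set $C\cap H$ for part (i), and for part (ii) the same $\mathscr{R}_y$-based boundedness bootstrap from Assumption~\ref{ass:AC} followed by the standard Lasserre-hierarchy convergence argument (which the paper delegates to Schweighofer and to \cite{FlatTrun} rather than sketching, and which likewise only asserts, without proof, that a generic $\xi$ yields a unique minimizer). The only cosmetic differences are that you derive the infeasibility contradiction directly by applying $\mathscr{R}_y$ to the certificate instead of invoking weak duality for the dual SDP \reff{max:gm}, and that you unpack the cited convergence results.
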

\begin{proof}
(i) For convenience of notation, let
$f_{r+j} = h_j$ for $j=1,\ldots, t$. Then
\[
C \cap H = \{ x \in \re^n: \, f_i(x) \geq 0, i=1,\ldots, r+t\}.
\]
If the SFP is infeasible, the intersection $C \cap H $ is empty.
By the Positivstellensatz \cite{BCR}, there exist sos polynomials
$s_\dt$ ($\dt \in \{0,1\}^{r+t}$) such that
\[
-2 = \sum_{ \dt \in \{0,1\}^{r+t} }  s_\dt
f_1^{\dt_1} \cdots f_{r+t}^{\dt_{r+t} }.
\]
Denote the polynomial
\[
p \, := \, 1 + \sum_{ \dt \in \{0,1\}^{r+t} }  s_\dt
f_1^{\dt_1} \cdots f_{r+t}^{\dt_{r+t} } ,
\]
which is the constant polynomial $-1$.
Clearly, $p$ is positive on $C\cap H$. Denote
\[
F \, := \, (f_1, \ldots, f_{r+t}).
\]
Define the $k$th quadratic module of $F$ ($f_0=1$)
\[
\mbox{Qmod}_k(F) :=
\left\{
\left. \sum_{i=0}^{r+t} s_i f_i \right| s_i \, \mbox{ is sos},
\deg(s_if_i) \leq 2k
\right\}.
\]
It is a convex cone. By Putinar's Positivstellensatz~\cite{Putinar},
we have $p \in \mbox{Qmod}_k(F)$
(i.e., $-1 \in \mbox{Qmod}_k(F)$ because $p=-1$),
when $k$ is sufficiently large.
The dual optimization problem of
\reff{Las:POP} can be shown to be (cf. \cite{Las01})
\be  \label{max:gm}
\max \quad \gamma  \quad
\mbox{s.t.} \quad c - \gamma \in \mbox{Qmod}_k(F).
\ee
Every feasible $\gamma$ in \reff{max:gm}
is a lower bound for the objective value
of \reff{Las:POP}, whenever $y$ is feasible.
This is the so-called weak duality.
For all $k$ sufficiently large such that $-1 \in \mbox{Qmod}_k(F)$,
the problem \reff{max:gm} is unbounded from above,
by which the weak duality implies that
\reff{Las:POP} is infeasible.

(ii) Let $\mathscr{R}_y$ be the linear functional defined as in \reff{lf:Ry}.
By the equality \reff{ac:aifi:bjhj} in Assumption~\ref{ass:AC},
we can get (note $f_0=1$, $y_0=1$)
\[
R - \sum_{|\bt| \leq d} y_{2\bt} = \mathscr{R}_y( R - \| [x]_d \|^2)
= \sum_{i=0}^r \mathscr{R}_y( a_i f_i ) + \sum_{j=1}^t \mathscr{R}_y( b_j h_j ).
\]
Since $a_i = p_1^2+\cdots + p_\ell^2$ is sos, one can verify that
\[
\mathscr{R}_y( a_i f_i ) = \sum_{j=1}^\ell \mathscr{R}_y( p_j^2 f_i ) = \sum_{j=1}^\ell
vec(p_j)^T \Big(  L_{f_i}^{(k)}[y] \Big)  vec(p_j) \geq 0,
\]
because $ L_{f_i}^{(k)}[y] \succeq 0$.
(The $vec(p_j)$ denotes the coefficient vector of $p_j$.)
Similarly, we can show that
$\mathscr{R}_y( b_j h_j ) \geq 0$,
because $ L_{h_j}^{(k)}[y] \succeq 0$. So,
$
R \geq \sum_{|\bt| \leq d} y_{2\bt}.
$
Moreover, for all $x^\theta$ with $|\theta| \leq k-d$,
\[
R x^{2\theta} - \| x^\theta [x]_d \|^2 =
\sum_{i=0}^r (a_i x^{2\theta}) f_i +
\sum_{j=1}^t \mathscr{R}_y (b_j x^{2\theta})  h_j.
\]
By the same argument, we can show that
\be  \label{R-x2:sos}
R y_{2\theta} \geq \sum_{|\bt| \leq d} y_{2\bt+2\theta}
\ee
for all $y$ that is feasible in \reff{Las:POP}.
The diagonal entries of $L_{1}^{(k)}[y]$
are precisely $y_{2\bt}$ with $|\bt| \leq k$.
Since $L_{1}^{(k)}[y] \succeq 0$, $y_{2\bt} \geq 0$
for all $|\bt|\leq k$ and $\| y \|^2$
is bounded by the trace of $L_{1}^{(k)}[y]$.
Applying \reff{R-x2:sos} recursively,
one can see that the norm of $y$ can be bounded
by a constant that is only depending on $R,k$.
So, the feasible set of \reff{Las:POP}
is compact, and hence \reff{Las:POP}
must have an optimizer.

Since $\| \xi \| \le \half$, the objective $c(x)$
is a coercive polynomial. When $\xi$ is generically chosen,
the objective $c(x)$ has a unique optimizer over the set
$C \cap H$. By Corollary~3.5 of \cite{Swg05}
or Theorem~3.3 of \cite{FlatTrun}, the sequence $\{u^k\}_{k=d}^{\infty}$
must converge to the unique optimizer of \reff{POP:xi},
which is clearly a solution to the SFP.
\end{proof}

\begin{remark}
Under some general conditions,
Algorithm~\ref{alg:SFP} must terminate in finitely many steps.
To be more precisely, under the standard constraint qualification,
the second order sufficiency and
the strict complementarity conditions
for the optimization problem \reff{POP:xi},
the hierarch of Lasserre type moment relaxations \reff{Las:POP}
must have finite convergence. We refer to \cite{opcd}
for more details about the finite convergence
and refer to \cite{FlatTrun} about how to detect convergence.

In particular, Algorithm~\ref{alg:SFP} terminates at the first iteration
with $k=d$, when the polynomials $f_i, h_j$ are sos-concave.
This is because $C\cap H = S_d$, by Theorem~\ref{CH=Sk:sos},
and hence the set of $u = (y_{e_1},\ldots, y_{e_n})$,
with $y$ satisfying all the constraints in \reff{Las:POP},
is the same as the feasible set of \reff{POP:xi}.
\end{remark}

%-------------------------Numerical results--------------------------%
\section{Numerical experiments}
\setcounter{equation}{0}

This section reports numerical experiments
for solving split feasibility problems with polynomials.
Algorithm~\ref{alg:SFP} is applied to solve them.
It can be implemented conveniently by the software
{\tt GloptiPoly} \cite{Glop}, which calls
{\tt SeDuMi} \cite{SeD1,SeD2} for solving the semidefinite programs.
The computation is implemented in MATLAB R2014
on a laptop with Intel(R) Core(TM) i5-3337U CPU (1.80 GHz).
The computational results are displayed in four decimal digits.
For convenience of expression, we give the sets $C,Q$
by inequalities of the form $-f_i(x) \leq 0$ or $-g_j(y) \leq 0$.
If there is an equality $f(x)=0$,
it can be equivalently expressed as:
$-f(x) \leq 0, f(x) \leq 0$.

The classical CQ algorithm for solving SFPs has the iterative formula
\[
x^{k+1}=P_C(x^k-\gamma A^T (I-P_Q)Ax^k),
\]
for $k=0,\ 1,\ 2,\ \cdots$, where $0<\gamma<2/\rho(A^TA)$
is a parameter and $P_C$, $P_Q$ denote projections onto the sets
$C$, $Q$ respectively. To implement the CQ algorithm,
one needs to compute the projection $P_C(u)$,
which is equivalent to solving the optimization problem
\[
P_C(u)= {  \mbox{arg} \noindent \min\limits_{z\in C}  }
\quad \frac{1}{2}\|z-u\|^2.
\]
The same is true for $P_Q$. This might require a large amount of computations.
In practice, people often apply the relaxed CQ algorithm~\cite{Yang},
in which only the projections onto
hyperplane or half spaces are required.
A typical iterative formula for relaxed CQ algorithm is:
\[
x^{k+1} \, = \, P_{C_k} \big(x^k-\gamma A^T (I-P_{Q_k})Ax^k \big),
\]
where $C_k$ and $Q_k$ are half spaces passing through $x^k$
that contain the sets $C$ and $Q$, respectively.
In Example~\ref{exm:4.1}, we choose
$\gamma=1.8/\rho(A^TA)$. The stopping criterion is:
$f_i(x^k)<10^{-5}$ and $g_j(Ax^k)<10^{-5}$, or $k\ge kmax:=10^6$.
The performance of the relaxed CQ algorithm
usually depends on the initial point $x^0$
and the geometry of the SFP. Moreover, the CQ type algorithms
are not applicable when the sets $C,Q$ are not convex.
In contrast, our Algorithm~\ref{alg:SFP} does not assume
the sets are convex; it can also detect infeasibility.

First, we show an example of comparing
the relaxed CQ algorithm and Algorithm~\ref{alg:SFP}.

\begin{exm}  \label{exm:4.1}
Consider the sets $C,Q$:
\[
C=\left\{x\in {\mathbb R}^3 : \,
\frac{1}{2}(x-e)^TB_1(x-e)+b_1^T(x-e)\le 0 \right\},
\]
\[
Q=\left\{y\in {\mathbb R}^2 :\,
\frac{1}{2}(y-Ae)^TB_2(y-Ae)+b_2^T(y-Ae)\le 0 \right\},
\]
where ($a$ is a parameter)
\[
A=\left [\begin{matrix} 1& 2&3\\ 2& 3
&4\end{matrix}\right ],
B_1=\left [\begin{matrix} 1& 2&2\\ 2& 6 &6\\
2& 6& 7\end{matrix}\right ],
B_2=\left [\begin{matrix} a& 1\\
1&2\end{matrix}\right ],
b_1= \bbm 18 \\ 28 \\ 38\ebm,
b_2 = \bbm 2 \\ 8 \ebm.
\]
We want to find a pint $x^* \in C$ such that $Ax^*\in Q$.
Clearly, $e=(1,1,1)^T$ is such a point.
The matrix $B_1$ is positive semidefinite,
so $C$ is convex. When $a\ge \frac{1}{2}$,
$B_2$ is positive semidefinite and $Q$ is also convex.
For different values of $a$, we compare the performance of
Algorithm~\ref{alg:SFP} and the relaxed CQ algorithm.
We choose the initial point as $(-50, 50, 50)^T$,
which is not close to $(1,1,1)^T$.
The numerical results are reported in Table~\ref{tab:01}.
The time is in seconds.
%%%%%%%%%%%%%%%
\begin{table}[htb]
\caption{
A comparison between Algorithm~\ref{alg:SFP} and the relaxed CQ algorithm
}
\begin{tabular}{|c|c|c|c|c|c|c|} \hline
Value of $a$ &\ $a=5$\  &\  $a=50$\  &\ $a=500$  \ & $a=5000$ \ & $a=20000$\\
\hline
Relaxed CQ Alg. &0.1055 &0.4126 &0.6487 &1.4480 &2.7401 \\  \hline
Alg.~\ref{alg:SFP}&0.5931 &0.6237 &0.6538 &0.6756 &0.6697 \\ \hline
\end{tabular}
 \label{tab:01}
\end{table}
%%%%%%%%%%%%%%%
For $a \leq 500$, the relaxed CQ algorithm can get a solution faster.
However, for larger values of $a$, e.g., $a =5000$ and $a=20000$,
Algorithm~\ref{alg:SFP} is faster. The time
consumed by Algorithm~\ref{alg:SFP}
does not change much as $a$ increases,
while the time by the relaxed CQ algorithm increases fast.
The semidefinite relaxation method behaves more stably.
\end{exm}

\begin{exm}  \label{exm:4.2}
Consider the sets $C,Q$:
\[
C=\left\{x\in {\mathbb R}^5
\left| \baray{l}
x_1^2+x_2^2-x_3^5+x_4x_5-3 \le 0, \, x_1(x_1-1)=0, \\
x_1^4+x_2^4+x_5^4-2\le 0,\, 3x_2+2\le 0
\earay \right.
\right\},
\]
\[
Q=\{y\in {\mathbb R}^4|\ \frac{1}{2}y^TB_1y+b_1^Ty-1\le 0,
\ \frac{1}{2}y^TB_2y+b_2^Ty-2\le 0\},
\]
where
\[
B_1=\left [\begin{matrix} 1& 4&6.5 &6\\ 4& 2 &0.5 &2.5\\
6& 0.5& 10 & 2.5\\ 6& 2.5& 2.5 &9\end{matrix}\right ], \quad
B_2=\left [\begin{matrix} 18 &12& 7 &19.5 \\
12&2&2.5&7.5\\ 7&2.5&10&14\\ 19.5&7.5&14&18\end{matrix}\right ],
\]
\[
A=\left [\begin{matrix} 2& 5&8 &3 &6\\ 1& 0
&4&2&5\\ 6& 9& 7 &0 &1\\ 0 &2 &1 &0 &3\end{matrix}\right ], \quad
b_1= \bbm 2 \\ 1 \\ 4 \\ 3 \ebm, \quad
b_2= \bbm -1 \\ 3 \\ 0 \\ 5 \ebm.
\]
The sets $C,Q$ are nonconvex.
By Algorithm~\ref{alg:SFP}, we got a feasible point
\[
(0.0000, -0.6667, 0.8611, -0.2181, -0.3341)^T.
\]
It took about $2.95$ seconds.
The CQ type methods are not applicable
because of the nonconvexity of $C,Q$.
\end{exm}

\begin{exm} \label{exm:4.3}
Consider the sets ($R$ is a parameter)
\[
C = \left\{x\in {\mathbb R}^3
\left|\baray{r}
-f_1(x):= x_1^4+x_2^4+x_3^4+2x_1^2x_2^2+x_1^2x_3^2+x_2^2x_3^2 \\
-4x_1-4x_2-4x_3+1\le 0
\earay \right.
\right\},
\]
\[
Q= \left\{y\in {\mathbb R}^3 \mid
(y_1-2)^2+(y_2-2)^2+(y_3-2)^2-R \le 0
\right\}.
\]
The function $-f_1(x)$ is convex over ${\mathbb R}^3$, because
\[
\baray{rcl}
-\nabla^2 f_1(x) & = & \left [
\begin{matrix} 12x_1^2+4x_2^2+2x_3^2& 8x_1x_2&4x_1x_3 \\ 8x_1x_2& 4x_1^2+12x_2^2+2x_3^2
&4x_2x_3\\ 4x_1x_3& 4x_2x_3& 2x_1^2+2x_2^2+12x_3^2
\end{matrix}\right ] \\
& = & 4 \bbm x_1 \\ x_2 \\ x_3 \ebm \bbm x_1 \\ x_2 \\ x_3 \ebm^T +
4 \bbm x_1 \\ x_2 \\ 0 \ebm \bbm x_1 \\ x_2 \\ 0 \ebm^T +
\diag \bbm 4x_1^2+4x_2^2+2x_3^2 \\ 4x_1^2+4x_2^2+2x_3^2
\\ 2x_1^2+2x_2^2+8x_3^2  \ebm
\earay
\]
is positive semidefinite for all $x\in{\mathbb R}^3$.
(The $\diag(w)$ denotes the diagonal matrix
whose diagonal is $w$.)
The sets $C,Q$ are both convex. Consider the matrix $A=I$.
This SFP is equivalent to finding $x^* \in C \cap Q$.
The smallest $R$ for $C \cap Q \ne \emptyset$
is $R_0 \approx 2.0623$, which is the square of the distance between
$(2,2,2)$ and $C$.
For different values of $R$, we apply Algorithm~\ref{alg:SFP}
to solve the SFP. The results are shown in Table~\ref{tab:02}.
%%%%%%%%%%%%%%%%%%%%%%%%%%
\begin{table}[htb]
\caption{Computational results for Example~\ref{exm:4.3} }
\begin{tabular}{|r|c|c|} \hline
$R$ \, & feasibility  &  a solution $x^*$ \\
\hline
$4.00$&\ feasible\  &\  $(0.8228,0.8604,0.8531)$\  \\
\hline
$3.00$&\ feasible\  &\  $(1.0012,0.9655,1.0345)$\ \\
\hline
$2.07$&\ feasible\ &\  $(1.1813,1.1285,1.1998)$\ \\
\hline
$2.06$&\ infeasible\ &\  none\ \\
\hline
$2.00$&\ infeasible\ &\  none\ \\
\hline
$1.00$&\ infeasible\ &\  none\ \\
\hline
\end{tabular}
\label{tab:02}
\end{table}
%%%%%%%%%%%%%%%%%%%%%%%%%%
\end{exm}

\begin{exm} \label{exm:4.4}
Let $C$ be the same set as in Example~\ref{exm:4.3} and $Q$ be
\[
Q \, := \, \left\{y\in {\mathbb R}^3 \mid
(y_1-2)^2+(y_2-2)^2+a(y_3-2)^2-1.5\le 0 \right\},
\]
where $a$ is a parameter.  The matrix $A$ is also the identity.
When $a=1$, the SFP is infeasible because $1.5<R_0$.
However, the feasibility changes as $a$ varies.
In Table~\ref{tab:03}, we list some values of $a$
such that the SFP is feasible/infeasible.
%%%%%%%%%%%%%%%%%%%%
\begin{table}[htb]
\caption{Computational results for Example~\ref{exm:4.4} }
\begin{tabular}{|r|c|c|} \hline
$a$\,\,\, &\ feasibility &\  a solution $x^*$\  \\ \hline
$1.00$&\ infeasible\  &\  none  \\ \hline
$0.50$&\ infeasible\  &\  none \\ \hline
$0.25$&\ feasible\ &\  $(1.2745,1.2381,0.7461)$\ \\
\hline
$0.10$&\ feasible\ &\  $(1.2237,1.2001,0.3954)$\ \\
\hline
$0.00$&\ feasible\ &\  $(1.1187,1.1495,-0.0248)$\ \\
\hline
$-5.00$&\ feasible\ &\  $(0.0912,0.0374,0.1216)$\ \\
\hline
\end{tabular}
\label{tab:03}
\end{table}
%%%%%%%%%%%%%%%%%%%%%%%%%%
The maximum value for $a$ such that
$C \cap Q \ne \emptyset$ is around $0.2786$,
which can be found by maximizing $a$
subject to the constraints in $C,Q$.
%
%Moreover, by applying them, we can also get the maximal value of
%$a$ such that the problem is feasible, that is,
%$a^*=0.2786$, and in this case
%$x^*=(1.2292,1.2292,0.9421)^T$, with $c(x^*)=7.0624*10^{-8}$
%and $q(x^*,a^*)=-5.8015*10^{-9}$. This make it possible to
%get a boundary of the coefficient such that the problem is
%feasible for some SASFP that contains uncertain coefficient.
%
\end{exm}

\begin{exm} \label{exm:4.5}
Consider the sets $C,Q$:
\[
C=\left\{x\in {\mathbb R}^3
\left|\baray{l}
5x_1^{10}+3x_1^5x_3+x_2^4+x_3^2+8x_3+1 \le 0,\\
x_1^4+5x_1^2x_2^2-8x_1x_2+3x_2x_3^3+x_2^4+x_3^4-1\le 0
\earay \right.
\right\},
\]
\[
Q=\{y\in {\mathbb R}^2|\ \frac{1}{2}y^TB_1y+b_1^Ty\le 0, \,
 \frac{1}{2}y^TB_2y+b_2^Ty-1\le 0\},
\]
with
\[
A= \bbm 1& 2 &3 \\ 0& 1& 2\ebm, \,
B_1=\bbm 1& 1.5\\ 1.5& 5\ebm, \,
B_2=\left [\begin{matrix} 5 & 0.5 \\
0.5 & 2 \end{matrix}\right ], \,
b_1= \bbm 2 \\ 8 \ebm, b_2= \bbm 10 \\ 5 \ebm.
\]
The sets $C,Q$ are nonconvex.
By Algorithm~\ref{alg:SFP}, we know the SFP is feasible,
and we got a solution $(0.0138,-0.0128,-0.1270)^T$.
It took about $2$ seconds.
\end{exm}

\begin{exm} \label{exm:4.6}
Consider the sets ($R$ is a parameter)
\[
C= \left\{x\in {\mathbb R}^2
\left|\baray{r}
f(x) := x_1^5-10x_1^4x_2+8x_1^2x_2^3-6x_1^2x_2^2+5x_1^3\\
-7x_1^2x_2+3x_1x_2^2-9x_2^3+2x_1^2+1 \le 0
\earay\right.
\right\},
\]
\[
Q=\{y\in {\mathbb R}^2|\ y_1^2+y_2^2-R \le 0\}.
\]
The matrix $A=I$.
%
%Consider the following polynomial for defining the set $C$
%$$f(x)=x_1^5-10x_1^4x_2+8x_1^2x_2^3-6x_1^2x_2^2+5x_1^3-7x_1^2x_2+3x_1x_2^2-9x_2^3+2x_1^2+1.$$
%This function is nonconvex (also nonconcave),
%
%It is easy to see that the
%points $(0,2)$ and $(1,2)$ are in the set $\{x|\ f(x)\le 0\}$, while
%the origin $(0,0)$ is not in this set.
%
%The set $C$ is nonconvex and unbounded.
%The graph and contour map of
%this function are shown in Figure~1.
%\begin{figure}\begin{center}
%\includegraphics[width=6in,height=2.5in]{nonconvex_f}
%\caption{Graph and Contour Map of $f$ in Example~\ref{exm:4.6} }
%\end{center}
%\end{figure}
%
The set $C$ is nonconvex and unbounded.
We apply Algorithm~\ref{alg:SFP} to solve the SFP,
with different values of $R$. The numerical results are
stated in the following Table~\ref{tab:04}.
%%%%%%%%%%%%%%%%%%%%%%%
\begin{table}[htb]
\caption{Computational results for Example~\ref{exm:4.6} }
\begin{tabular}{|r|c|c|} \hline
$R$ \, &\ feasibility   &\  a solution $x^*$  \\ \hline
$100.0$&\ feasible\  &\ $(-0.0845,0.4690)$\ \\ \hline
$10.0$&\ feasible\  &\ $(-0.0126,0.4793)$\ \\ \hline
$1.0$&\ feasible\ &\  $(-0.2343,0.4292)$\ \\ \hline
$0.5$&\ feasible\ &\  $(-0.2128,0.4370)$\ \\ \hline
$0.2$&\ infeasible\ &\  none\ \\ \hline
$0.1$&\ infeasible\ &\  none\ \\ \hline
\end{tabular}
\label{tab:04}
\end{table}
%%%%%%%%%%%%%%%%%
\end{exm}

\begin{exm} \label{exm:4.7}
Let $A=I$ and $C,Q$ be the sets ($a$ is a parameter):
\[
C=\left\{x\in \mathbb{R}^2
\left| \,\frac{1}{9}x_1^2+\frac{1}{4}x_2^2-1\le 0, \right.
\ x_1^2+x_2^2-1\ge 0
\right\},
\]
\[
Q= \left\{y\in \left. \mathbb{R}^2 \, \right|
\, y_2-y_1\le 0,\ y_1\le 2,\ y_2\ge a
\right\}.
\]
The set $C$ is nonconvex.
The intersection $C \cap Q$ is nonconvex for $a< \sqrt{2}/2$.
The SFP is infeasible for $a>\sqrt{36/13}\approx 1.6641$.
%
%Figure~2 draws the set $C\cap Q$ for $a=?$
%%\begin{center}
%%\includegraphics[width=4in]{nonconvex_set}
%%\end{center}
% \begin{figure}
% \begin{center}
% \includegraphics[width=3.2in]{nonconvex_set}
% \caption{\bf What is the meaning of this figure?
% Can you shade $C\cap H$? What is the value of $a$ here?}
% \end{center}
% \end{figure}
%
By Algorithm~\ref{alg:SFP},
for different values of $a$, we solve the SFP.
The computational results are in Table~\ref{tab:05}.
%%%%%%%%%%%%%%%%%%%%%%%%
\begin{table}[htb]
\caption{Computational results for Example~\ref{exm:4.7} }
\begin{tabular}{|r|c|c|c|} \hline
$a$ \, &  $C\cap Q$ \ &\ feasibility \  &\ a solution $x^*$  \\
\hline
$-2.0$&\ nonconvex\ &\ feasible\  &\  $(0.4520,-0.8920)$\  \\
\hline
$-1.5$&\ nonconvex\ &\ feasible\  &\  $(0.4059,-0.9139)$\ \\
\hline
$-1.0$&\ nonconvex\ &\ feasible\ &\  $(-0.0922,-0.0922)$\ \\
\hline
$0.0$&\ nonconvex\ &\ feasible\ &\  $(1.0000,0.0000)$\ \\
\hline
$\sqrt{2}/2$&\ convex\ &\ feasible\ &\  $(0.7071,0.7071)$\ \\
\hline
$1.8$& convex\ &\ infeasible\ &\  none \\
\hline
\end{tabular}
\label{tab:05}
\end{table}
%%%%%%%%%%%%%%%%%%%%%%
%
%Moreover, we also tried to change the requirement of
%$y_2-y_1\le 0$ into $y_2-y_1-2\le 0$ in the definition of the set $Q$,
%so that the solution set contains a hole.
%In this case, GloptiPoly returned a solution $(2.0000,-1.0000)$.
%
\end{exm}

\section{Conclusions}

This paper discusses the split feasibility problem with polynomials.
The sets are semi-algebraic sets, defined by polynomial inequalities.
But they are allowed to be nonconvex or even infeasible.
Semidefinite relaxations are proposed for 
representing the intersection of the sets.
We gave conditions that guarantee these relaxations
are exact for representing the intersection. Based on these relaxations,
Algorithm~\ref{alg:SFP} is proposed 
for solving the split feasibility problem.
Its convergence is proved. Under a general condition,
we prove that: if the SFP is feasible,
we are able to compute a feasible solution;
if it is infeasible, we can obtain a certificate for the infeasibility.

\bigskip
{\bf Acknowledgment.} \,
Jiawang Nie was partially supported by the NSF grants
DMS-1417985 and DMS-1619973.
Jinling Zhag was partially supported by the National Natural Science Foundation of China,
under the grants 11101028, 11271206,
and the Fundamental Research Funds for the Central Universities.
This work is conducted when Jinling Zhao is a
visiting scholar at the University of California, San Diego.

%--------------------------- bibliography ----------------------------------%

\end{document}